\title{Albanese morphism of log smooth klt compact K\"ahler manifold with nef log anticanonical divisor}
\author{Xiaojun WU}
\date{\today}
\newtheorem{mythm}{Theorem}
\newtheorem{myprop}{Proposition}
\newtheorem{mycor}{Corollary}
\newtheorem{mydef}{Definition}
\newtheorem{myex}{Example}
\begin{document}
\def\cI{\mathcal{I}}
\def\Z{\mathbb{Z}}
\def\Q{\mathbb{Q}}  \def\C{\mathbb{C}}
 \def\R{\mathbb{R}}
 \def\N{\mathbb{N}}
 \def\H{\mathbb{H}}
  \def\P{\mathbb{P}}
 \def\rC{\mathcal{C}}
  \def\nd{\mathrm{nd}}
  \def\d{\partial}
 \def\dbar{{\overline{\partial}}}
\def\dzbar{{\overline{dz}}}
 \def\ii{\mathrm{i}}
  \def\d{\partial}
 \def\dbar{{\overline{\partial}}}
\def\dzbar{{\overline{dz}}}
\def \ddbar {\partial \overline{\partial}}
\def\cN{\mathcal{N}}
\def\cE{\mathcal{E}}  \def\cO{\mathcal{O}}
\def\cF{\mathcal{F}}
\def\P{\mathbb{P}}
\def\cJ{\mathcal{J}}
\def \loc{\mathrm{loc}}
\def \log{\mathrm{log}}
\def \det{\mathrm{det}}
\def \cC{\mathcal{C}}
\bibliographystyle{plain}
\def \dim{\mathrm{dim}}
\def \RHS{\mathrm{RHS}}
\def \liminf{\mathrm{liminf}}
\def \ker{\mathrm{Ker}}
\def \Pic{\mathrm{Pic}}
\def \Alb{\mathrm{Alb}}
\def \tors{\mathrm{Tors}}
\def \ch{\mathrm{ch}}
\def \Id{\mathrm{Id}}
\def \td{\mathrm{td}}
\def \id{\mathrm{id}}
\def \D{\mathbb{D}}
\maketitle
\begin{abstract}
Let $(X, \omega)$ be an n-dimensional compact K\"ahler manifold.
Let $D=\sum (1-\beta_j) Y_j=\sum (1-\beta_j) [s_j=0]$ a divisor with simple normal crossings with $\beta_j \in ]0,1[$ such that $-(K_X+D)$ is nef. 
We show that its Albanese map is submersion outside an analytic set of codimension larger than two with connected fibres.
\end{abstract}
\section{Regularity of Monge-Ampère equation with conic singularities}
The following result on the regularities of the Monge-Ampère equation is well-known to experts.
Since we have not found a precise reference of some result, we provide some proof here.
Let us fix some notations.
Let $(X,D)$ be a \textit{log smooth klt pair}, i.e. $X$ is a compact Kähler manifold, and $D= \sum (1-\beta_k)Y_k$ is a $\R$-divisor with simple normal crossing support such that $\beta_k\in ]0,1[$ for all $k$.
The coefficients $(\beta_k)$ are called standard orbifold is $
\beta_k=1/n_k$ for some $n_k \in \N^*$ for all $k$.
We denote by $\omega_\beta$ the standard cone metric attached with $(\C^n, D)$, i.e.
$$\omega_\beta:= i\sum_{k=1}^d \frac{ dz_k \wedge d \bar z_k}{|z_k|^{2(1-\beta_k)}} +i\sum_{k=d+1}^n dz_k \wedge d \bar z_k.  $$
A metric $\omega$ is said to have conic singularities if it is quasi-isometric to the model metric with conic singularities: more precisely, near each point $p\in \mathrm{Supp}(D)$ where $\mathrm{Supp}(D)$ is defined by the equation $\{z_1 \cdots z_d=0\}$ for some holomorphic system of coordinates $(z_i)$, we want $\omega$ to satisfy
$$C^{-1} \omega_{\beta} \le \omega \le C \omega_{\beta}$$
for some constant $C>0$ and some $
\beta$ (near $p$).
We have the following result due to \cite{CGP}.
\begin{myprop}
 Let $X$ be a compact Kähler manifold and $D=\sum (1-\beta_j) Y_j=\sum (1-\beta_j) [s_j=0]$ a divisor with simple normal crossings with $\beta_j \in ]0,1[$. Let $\mu_{D}=dV_\omega/\prod_j |s_j|^{2(1-\beta_j)}$ be a volume form with conic singularities along $D$, $\mu \in \R$, and $\omega$ a Kähler form on $X$. Then any (bounded) solution $\varphi$ of
\[(\omega+ i\d \dbar \varphi)^n = e^{\mu \varphi} \mu_{D}\]
is Hölder-continuous and the metric $\omega+ \id\dbar \varphi$ has conic singularities along $D$.
\end{myprop}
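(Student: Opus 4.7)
The plan is to solve the equation by approximation: regularize the singular volume form, solve the resulting smooth Monge--Amp\`ere equations against a carefully chosen K\"ahler reference metric of controlled curvature, and extract uniform $C^0$ and Laplacian bounds that survive the limit. Concretely, I would set
\[
\mu_{D,\epsilon}:=\frac{dV_\omega}{\prod_j (|s_j|^2+\epsilon^2)^{1-\beta_j}},
\]
pick a smooth approximation $\omega_\epsilon=\omega+\ii\ddbar\psi_\epsilon$ of the model cone form $\omega_\beta$ in the cohomology class of $\omega$, and solve the family of smooth equations
\[
(\omega+\ii\ddbar\varphi_\epsilon)^n=c_\epsilon\, e^{\mu\varphi_\epsilon}\,\mu_{D,\epsilon}
\]
via Yau's theorem (combined with a fixed-point argument when $\mu\neq 0$), normalizing so that $c_\epsilon\to 1$. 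The decisive property that $\omega_\epsilon$ must have is a uniform lower bound on its holomorphic bisectional curvature, which is provided by the construction of \cite{CGP}.

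Next I would collect uniform a priori estimates on $\varphi_\epsilon$. The $L^\infty$ bound comes from Ko\l{}odziej's estimate, since $\mu_{D,\epsilon}$ has uniformly bounded $L^p$-norm with respect to $dV_\omega$ for some $p>1$ close to $1$ (this is where $\beta_j<1$ enters). The key Laplacian bound is obtained by applying the Chern--Lu / Aubin--Yau maximum principle to
\[
\log \mathrm{tr}_{\omega_\epsilon}(\omega+\ii\ddbar\varphi_\epsilon)-A\varphi_\epsilon
\]
for $A$ sufficiently large; the curvature lower bound on $\omega_\epsilon$ is exactly what allows the maximum principle to close, and yields a uniform upper comparison $\omega+\ii\ddbar\varphi_\epsilon\leq C\,\omega_\epsilon$. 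Inserting this back into the equation and using the uniform $C^0$ control on $\varphi_\epsilon$ forces the reverse comparison $\omega+\ii\ddbar\varphi_\epsilon\geq C^{-1}\omega_\epsilon$. Passing to the limit $\epsilon\to 0$ and using that $\omega_\epsilon$ converges locally to $\omega_\beta$ away from $\mathrm{Supp}(D)$ gives that $\omega+\ii\ddbar\varphi$ is quasi-isometric to $\omega_\beta$ near each point of $\mathrm{Supp}(D)$, i.e. has conic singularities along $D$.

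Finally, H\"older continuity of $\varphi$ on all of $X$ follows either by interpolation between the $L^\infty$ and Laplacian bounds on the smooth locus plus a uniform modulus of continuity across $\mathrm{Supp}(D)$, or more directly from the H\"older version of Ko\l{}odziej's estimate applied to the right-hand side of the Monge--Amp\`ere equation. The main obstacle, and the genuine content of \cite{CGP}, is the uniform Laplacian estimate: making the Chern--Lu computation survive the passage to the limit requires the delicate construction of smooth K\"ahler forms $\omega_\epsilon$ approximating the cone metric while keeping their bisectional curvature uniformly bounded from below, despite the target geometry being singular. Once this ingredient is granted the remaining steps are fairly standard Monge--Amp\`ere technology.
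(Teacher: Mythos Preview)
Your sketch is essentially correct and follows the strategy of \cite{CGP}: regularize the right-hand side, build approximating reference metrics $\omega_\epsilon$ with uniformly bounded holomorphic bisectional curvature below, obtain a uniform $C^0$ bound via Ko\l{}odziej, run the Chern--Lu/Aubin--Yau inequality to get the uniform Laplacian estimate, and pass to the limit. This is indeed the heart of the matter, and you have correctly identified the construction of the $\omega_\epsilon$ with controlled curvature as the nontrivial input.

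There is nothing to compare against in the paper itself, however: the paper does \emph{not} prove this proposition. It is stated explicitly as ``the following result due to \cite{CGP}'' and is quoted without proof, serving only as background for the sharper orbifold regularity statements (Propositions~2 and~3) that the paper actually uses. So your proposal is not reproducing the paper's argument but rather outlining the original proof from \cite{CGP} (and \cite{GP}) to which the paper defers. If you intend to include a proof, you should either present it as a summary of \cite{CGP} or simply cite that reference, as the paper does.
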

In the following section, we start by considering the standard orbifold case to illustrate the proof. 
In this case, we have a more precise regularity of $\varphi$, and we adapt the notations of \cite{CGP} in this case.
As written at the beginning of this section, the following Proposition 3 is well-known to experts.

We fix $\D^n \subset \C^n$ the unit polydisk centered at the origin, and a divisor $D= \sum_{k=1}^d (1-1/n_k) D_k$ where $D_k=\{z_k=0\}$ for all $k$, and $d\le n$. 

Consider the branched cover
\[
\begin{array}{cccc}
\pi:& \mathbb{D}^d \times \D ^{n-d} & \longrightarrow & \mathbb{D}^d \times \D ^{n-d} \\
&(z_1, \ldots, z_d, z_{d+1}, \ldots, z_n) & \longmapsto & (z_1^{n_1}, \ldots, z_d^{n_d}, z_{d+1}, \ldots, z_n).
\end{array}
\]
If we denote by $w$ the coordinates upstairs, 
$$\pi^* \omega_\beta=i\sum_{k=1}^d |n_k|^2 dw_k \wedge d \bar w_k +i\sum_{k=d+1}^n dw_k \wedge d \bar w_k.$$
In particular, $\pi^* \omega_\beta$ is a genuine metric upstairs.
Recall that in the standard orbifold case, a function $f$ is said to have orbifold $ C^{k,\alpha}$ regularity (denoted by $C^{k, \alpha, \beta}$) if its pull-back $\pi^*f$ by the ramified cover $\pi$ is $ C^{k,\alpha}$ in the usual sense. 
More generally, let $\tau$ (resp.  $\sigma$) be a bounded $(1,0)$-form (resp. $(1,1)$-form) on $\D^n\setminus D$. Then we say that $ \tau \in C^{\alpha, \beta}  $ if for all $k$, we have $ \pi^*\tau(\frac{\d}{\d w_k}) \in  C^{\alpha}$ in the usual sense (resp.
 $ \sigma \in C^{\alpha, \beta}  $ if for all $k$ and $l$, we have $\pi^*\sigma(\frac{\d}{\d w_k},\frac{\d}{\d \overline{w}_l}) \in  C^{\alpha}$ in the usual sense).
 Define
 $$C^{2,\alpha, \beta}:=\{f\in L^{\infty}(\D^n);  f,\d f, \dbar \d f \in C^{\alpha,\beta} \}$$
 with the associated natural norm.
 It is checked in Lemma 7.2 \cite{CGP} that this definition coincides with the definition of Donaldson \cite{Don}.
 
Cover $X$ be open sets $U_i$ so that each open set is biholomorphic to the unit polydisk with ramified cover associated with $D$.
Denote $\tilde{U}_i$ the ramified cover of $\pi_i : \tilde{U_i} \to U_i$.
Let $N_i$ be the ramified order.
We have a more precise regularity result than Proposition 1, also stated in \cite{CGP}.
\begin{myprop}
Let $(X, D)$ as in Proposition 1 with standard orbifold coefficients, and let $\varphi\in L^{\infty}(X)$ be any solution of 
\[(\omega+ i\d \dbar \varphi)^n = e^{\mu \varphi} \mu_{D}\]
Then $\varphi$ belongs to the class $C^{2,\alpha, \beta}$. 
\end{myprop}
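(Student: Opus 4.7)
The idea is to lift the equation to the uniformizing ramified cover, where the conic model metric becomes genuinely smooth, and then invoke classical interior regularity for uniformly elliptic complex Monge-Amp\`ere equations.

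Cover $X$ by local charts $U_i\cong \D^n$ equipped with their branched covers $\pi_i:\tilde U_i\to U_i$ from the excerpt, and fix one such chart $U=U_i$, $\pi=\pi_i$, $\tilde U=\tilde U_i$. Set $\tilde\varphi := \pi^*\varphi$. Pulling back the equation yields
$$\bigl(\pi^*\omega + i\partial\bar\partial\tilde\varphi\bigr)^n = e^{\mu\tilde\varphi}\,\pi^*\mu_D.$$
A direct Jacobian computation shows that the factors $\prod_k |n_k w_k^{n_k-1}|^2$ coming from $|\det d\pi|^2$ cancel exactly the cone singular factors $\prod_k |z_k|^{2(1-1/n_k)}$ appearing in $\mu_D$, so $\pi^*\mu_D$ is a smooth strictly positive volume form on $\tilde U$. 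Moreover, choosing a local K\"ahler potential $\phi_0$ for $\omega$ on $U$ and setting $u:=\phi_0\circ\pi+\tilde\varphi$, we may rewrite the equation as $(i\partial\bar\partial u)^n = e^{\mu\tilde\varphi}\,\pi^*\mu_D$, with $\phi_0\circ\pi$ smooth upstairs.

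Now invoke Proposition 1: the potential $\varphi$ is H\"older continuous, and $\omega_\varphi := \omega + i\partial\bar\partial\varphi$ has conic singularities along $D$, i.e.\ $C^{-1}\omega_\beta \le \omega_\varphi \le C\omega_\beta$. Pulling back, $\pi^*\omega_\varphi = i\partial\bar\partial u$ is uniformly equivalent to $\pi^*\omega_\beta$, which by the computation in the excerpt is a genuine smooth K\"ahler form on $\tilde U$. Hence $u$ is a bounded plurisubharmonic solution of a \emph{uniformly elliptic} complex Monge-Amp\`ere equation on $\tilde U$, with a bounded, H\"older-continuous right-hand side (H\"older continuity of the RHS follows from that of $\tilde\varphi$ combined with smoothness of $\pi^*\mu_D$).

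At this point the classical theory applies: the complex Evans--Krylov theorem for uniformly elliptic Monge-Amp\`ere equations gives $u\in C^{2,\alpha'}_{\loc}(\tilde U)$ for some $\alpha'>0$, and a standard Schauder bootstrap (now that the RHS lies in $C^{\alpha'}$) upgrades this to the prescribed $\alpha$. Since $\phi_0\circ\pi$ is smooth, $\tilde\varphi = u - \phi_0\circ\pi \in C^{2,\alpha}_{\loc}(\tilde U)$, which by the very definition of orbifold regularity means $\varphi\in C^{2,\alpha,\beta}(U)$. Gluing the charts gives the global statement. The main obstacle, in my view, is verifying uniform ellipticity of the lifted equation \emph{across} the ramification locus of $\pi$: this is exactly where the ``conic singularities'' conclusion of Proposition 1 is indispensable, for otherwise $\pi^*\omega$ by itself is degenerate along $\{w_1\cdots w_d=0\}$ and standard interior Monge-Amp\`ere estimates would fail there. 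Once ellipticity is in hand, the remaining interior $C^{2,\alpha}$ argument is routine and already present in \cite{CGP}.
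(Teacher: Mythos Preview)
Your argument is essentially correct and is the standard one; in fact the paper does \emph{not} supply its own proof of this proposition, but simply attributes it to \cite{CGP}. What the paper \emph{does} prove is the stronger Proposition~3 (smoothness of $\pi^*\varphi$), and that proof \emph{assumes} Proposition~2 as a black box in order to start the Schauder bootstrap. So your proposal is not duplicating an argument already in the paper---it is filling in the step the paper delegates to \cite{CGP}.

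Two small remarks on your write-up. First, the H\"older continuity of the right-hand side is not really needed for the Evans--Krylov step: once Proposition~1 gives $C^{-1}\pi^*\omega_\beta\le i\partial\bar\partial u\le C\,\pi^*\omega_\beta$ on $\tilde U$, the complex Evans--Krylov theorem (in Siu's form) yields $u\in C^{2,\alpha'}_{\loc}$ for some $\alpha'>0$ depending only on $n$ and $C$, regardless of the modulus of continuity of $e^{\mu\tilde\varphi}\pi^*\mu_D$. Second, there is no ``prescribed $\alpha$'' in the statement---the class $C^{2,\alpha,\beta}$ is meant for \emph{some} $\alpha\in(0,1)$---so the Evans--Krylov output already suffices and no further bootstrap is required at this stage. (The bootstrap you sketch is exactly what the paper then carries out in its proof of Proposition~3 to reach $C^\infty$.) Your identification of the crucial point---that Proposition~1 is what guarantees uniform ellipticity of the lifted equation across the ramification locus, where $\pi^*\omega$ alone degenerates---is spot on.
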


More precisely, in the standard orbifold case, by elliptic regularity, one can show that $\pi^* \varphi$ is in fact smooth.
\begin{myprop}
Let $(X, D)$ as in Proposition 1 with standard orbifold coefficients, and let $\varphi\in L^{\infty}(X)$ be any solution of 
\[(\omega+ i\d \dbar \varphi)^n = e^{\mu \varphi} \mu_{D}\]
Then $\pi^* \varphi$ is smooth in each ramified cover.
\end{myprop}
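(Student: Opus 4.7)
The plan is to pull the Monge--Ampère equation back through $\pi$ and verify that upstairs it becomes a uniformly elliptic complex Monge--Ampère equation with $C^\infty$ data, so that a standard Schauder bootstrap applied to the $C^{2,\alpha}$ solution furnished by Proposition 2 gives $C^\infty$ regularity.

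First I would pull back the right-hand side. Since $\beta_k = 1/n_k$, one has
$$\pi^*|s_k|^{2(1-\beta_k)} = |w_k|^{2n_k(1-1/n_k)} = |w_k|^{2(n_k-1)},$$
while the Jacobian of $\pi$ in these coordinates is $\mathrm{diag}(n_k w_k^{n_k-1})$, so $\pi^* dV_\omega = \rho\,\prod_k|w_k|^{2(n_k-1)}\,dV_0$ for some smooth strictly positive function $\rho$ and $dV_0$ a smooth volume form upstairs. The factors $|w_k|^{2(n_k-1)}$ cancel, so $\pi^*\mu_D$ is smooth and strictly positive, and writing $\tilde\varphi := \pi^*\varphi$ the equation upstairs reads
$$(\pi^*\omega + i\partial\bar\partial\tilde\varphi)^n = e^{\mu\tilde\varphi}\,\Psi\,\omega_0^n$$
with $\Psi$ smooth and positive and $\omega_0$ a smooth reference $(1,1)$-form. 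By Proposition 1 the form $\omega' := \omega + i\partial\bar\partial\varphi$ has conic singularities, so locally $C^{-1}\omega_\beta \le \omega' \le C\omega_\beta$; pulling this chain back through $\pi$ and using the explicit formula for $\pi^*\omega_\beta$ (smooth and uniformly positive in the $w$-coordinates) we conclude that $\pi^*\omega + i\partial\bar\partial\tilde\varphi$ is uniformly equivalent to the Euclidean metric on the polydisk upstairs. Thus the equation upstairs is uniformly elliptic, and by Proposition 2 its solution $\tilde\varphi$ belongs to $C^{2,\alpha}$, so the coefficients $\tilde g^{j\bar k}$ of the linearization are $C^\alpha$.

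The final step is the classical Schauder bootstrap. Taking logarithms, the equation takes the form $F(D^2\tilde\varphi,\tilde\varphi,w) = 0$ for a smooth function $F$, whose linearization at $\tilde\varphi$ is the uniformly elliptic operator $Lv = \tilde g^{j\bar k}\partial_j\partial_{\bar k}v - \mu v$ with $C^\alpha$ coefficients. Differentiating the equation against any constant-coefficient first-order operator $\partial_\ell$ yields $L(\partial_\ell\tilde\varphi) = h_\ell$, where $h_\ell \in C^\alpha$ is an expression in $\partial_\ell(\pi^*\omega)_{j\bar k}$, $\partial_\ell\log\Psi$ and $\partial_\ell\tilde\varphi$. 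Interior Schauder estimates promote $\partial_\ell\tilde\varphi$ to $C^{2,\alpha}$, whence $\tilde\varphi \in C^{3,\alpha}$; iterating this step ad libitum delivers $\tilde\varphi \in C^\infty$. The only point where care is really needed is Step 1 (checking smoothness of the pulled-back data), as Step 2 is quasi-tautological from the conic bounds and Step 3 is a textbook bootstrap — crucially, Evans--Krylov is not required, since the $C^{2,\alpha}$ estimate has already been provided by Proposition 2.
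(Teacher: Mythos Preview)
Your proposal is correct and follows essentially the same route as the paper: pull back through $\pi$ to obtain a smooth Monge--Amp\`ere equation upstairs, invoke Proposition~2 for the initial $C^{2,\alpha}$ regularity of $\pi^*\varphi$ (hence $C^{0,\alpha}$ coefficients for the metric $h_{i\bar j}$), then differentiate the logarithm of the equation and bootstrap via interior Schauder estimates. The paper's proof is slightly terser but structurally identical, and your explicit verification that $\pi^*\mu_D$ is smooth (via the Jacobian cancellation) makes the same point the paper records in one line.
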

\begin{proof}
Note that $\pi^* d \nu_D$ is a smooth volume from upstairs.
Without loss of generality, assume that $\omega= i \d \dbar \psi$ for some smooth function locally.
Locally the pullback of the Monge-Ampère equation on the ramified cover can be written as
$$\det(\frac{\d^2}{\d w_i 
\d \overline{w}_j} \pi^*(\psi+\varphi))=ge^{\mu \pi^* \varphi}$$
for some nowhere vanishing smooth function $g$.
By Proposition 2, $\pi^* (\omega+ i\d \dbar \varphi) $ is a genuine metric with $C^{0, \alpha}$ coefficients.
Denote locally
$$\pi^* (\omega+ i\d \dbar \varphi)=\sum_{i,j} h_{i j}\sqrt{-1} dw^i \wedge d  \overline{w}^j.$$
From the Monge-Ampère equation, we have an a priori positive lower bound on the determinant of $h_{ij}$. 
In particular, if $h_{ij} \in C^{k,\alpha}$ for some $k \geq 0$, then
$\det(h_{ij})^{-1} \in C^{k, \alpha}$ (hence $h^{ij} \in C^{k, \alpha}$).
Take $\frac{\d}{\d w_l} \log (\forall l)$ for both sides of equation
$$h^{ij} \frac{\d^2}{\d w_i 
\d \overline{w}_j} \frac{\d}{\d w_l} \pi^* (\omega+ i\d \dbar \varphi)= \frac{\d}{\d w_l} \log g + \mu \frac{\d}{\d w_l} \pi^* \varphi. $$
Inductively for $k \geq 0$, by interior Schauder estimates for linear elliptic equations with coefficients in $C^{k, \alpha}$,
we obtain the a priori $C^{k+3, \alpha}$
norm estimate of $\pi^* \varphi$ in terms of the $C^{k+1, \alpha}$ norm of $\pi^* \varphi$. 
Thus $\pi^* \varphi$ is smooth in any ramified cover by bootstrapping.
\end{proof}

We now recall the definition of a singular metric on a vector bundle according to \cite{Paun16}.
\begin{mydef}
A singular Hermitian metric $h$ on $E$ is given locally by a measurable, possibly unbounded map with values in the set of semi-positive Hermitian matrices, such that $0<\det h < \infty$ almost everywhere.
\end{mydef}
By this definition, a solution of the Monge-Ampère equation with conic singularities defines a singular metric on $T_X$.
In particular, the solution also induces a singular metric on any quotient bundle of $T_X$.
We observe that by the Monge-Amp\`ere equation, the Ricci curvature of the singular metric is well defined as a current.
However, one can notice that the curvature tensor of $T_X$ is not necessarily well-defined as a current with values in semi-positive, possibly unbounded Hermitian matrices.

Now we return to the existence and regularity of the Monge-Amp\`ere equation for the general coefficient case.

In fact, the work of \cite{Gue} and \cite{CGP} gives the following weak estimate for the following type of Monge-Amp\`ere equation.
The theorem is not essentially used for the following section, but the discussion after the theorem also applies to this slight generalisation.
So we still state it explicitly.
\begin{mythm}
Let $X$ be an n-dimensional compact K\"ahler manifold, and let $D = \sum_i
a_i D_i$, $E = \sum_j
b_j E_j$ be two effective
$\R$-divisors with simple normal crossing support, such that for all $1 \leq i \leq r$, $0 < a_i < 1$.
Assume that $D$ and $E$ have no common irreducible component.
Let $\omega$ be a K\"ahler metric on $X$, $dV$ a smooth volume form, and let $\varepsilon>0$. Then the weak
solution of the Monge-Amp\`ere equation
$$\langle (\omega+\frac{i}{2 \pi} \d \dbar \varphi)^n \rangle=e^{\varepsilon \varphi} \frac{\prod|t_j|^{2 b_j}dV}{\prod|s_i|^{2 a_i}}$$
exists, which is smooth on $X \smallsetminus (D \cup E)$ and has an upper bound by a metric with conic singularity along $D$.
Here $\langle \bullet \rangle$ is the positive intersection product defined in {\rm \cite{BEGZ}}.
Here $s_i$(resp. $t_j$) is the canonical section of $\cO(D_i)$ (resp. $\cO(E_j)$) and $|s_i|^2$ (resp. $|t_j|^2$) is the norm of $s_i$ (resp. $t_j$) with respect to some smooth metric.
\end{mythm}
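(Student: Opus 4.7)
The plan is to follow the viscosity/approximation scheme of \cite{Gue} and \cite{CGP}. \textbf{Step 1 (Regularisation).} I would smooth both sides of the equation: replace $|s_i|^{-2a_i}$ by $(|s_i|^2+\delta)^{-a_i}$ and $|t_j|^{2b_j}$ by $(|t_j|^2+\delta)^{b_j}$, yielding a smooth strictly positive volume form $\mu_\delta$ on $X$. By the Aubin--Yau theorem, in the presence of the coercive factor $e^{\varepsilon\varphi}$ with $\varepsilon>0$ (which kills any cohomological obstruction and provides uniqueness via the maximum principle), the perturbed equation
\[(\omega+\tfrac{i}{2\pi}\d\dbar\varphi_\delta)^n = e^{\varepsilon \varphi_\delta}\mu_\delta\]
has a unique smooth solution $\varphi_\delta$.

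\textbf{Step 2 (Uniform estimates).} Next I would establish three estimates independent of $\delta$: (i) an $L^\infty$ bound on $\varphi_\delta$ via Ko\l odziej/Eyssidieux--Guedj--Zeriahi pluripotential theory, using that $\|\mu_\delta\|_{L^p}$ is uniformly bounded for some $p>1$ thanks to $a_i<1$ (any $b_j\ge 0$ is harmless since $|t_j|$ is bounded); (ii) a Chern--Lu / Aubin--Yau Laplacian estimate
\[\omega+\tfrac{i}{2\pi}\d\dbar\varphi_\delta \le C\,\omega_\beta\]
on $X$, where $\omega_\beta$ is a fixed smooth reference conic metric with cone angles $2\pi a_i$ along $D_i$, obtained via the maximum principle applied to the test function $\log\mathrm{tr}_{\omega_{\beta,\delta}}(\omega+\tfrac{i}{2\pi}\d\dbar\varphi_\delta) - A\varphi_\delta$ as in \cite{CGP}; (iii) local higher-order $C^{k,\alpha}$ estimates on any compact subset $K\Subset X\setminus(D\cup E)$, obtained by standard Evans--Krylov and Schauder bootstrapping, the Laplacian bound from (ii) reducing the Monge--Amp\`ere equation to a uniformly elliptic one away from the degeneracy locus.

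\textbf{Step 3 (Passage to the limit).} The estimates (i)--(iii) allow the extraction of a subsequence $\varphi_\delta \to \varphi$ which converges in $C^\infty_{\loc}(X\setminus(D\cup E))$ and uniformly on $X$. The limit $\varphi$ is smooth off $D\cup E$, solves the equation classically there, and satisfies the conic upper bound $\omega+\tfrac{i}{2\pi}\d\dbar\varphi\le C\omega_\beta$ inherited from (ii). Since $\varphi\in L^\infty(X)$, the non-pluripolar Monge--Amp\`ere product coincides with the weak limit of the smooth products $(\omega+\tfrac{i}{2\pi}\d\dbar\varphi_\delta)^n$ by \cite{BEGZ}, yielding the weak form of the equation on all of $X$.

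\textbf{Main obstacle.} The crux is step (ii): the uniform Laplacian upper bound by the conic metric. The standard Aubin--Yau computation fails directly because $\omega_\beta$ has unbounded bisectional curvature near $D$, so one must use a perturbed reference metric $\omega_{\beta,\delta}$ adapted to the regularisation and carefully balance the negative contribution of $\mathrm{tr}_{\omega_{\beta,\delta}}\mathrm{Ric}(\omega_{\beta,\delta})$ against the new factor $\prod(|t_j|^2+\delta)^{b_j}$ in $\mu_\delta$. The latter contributes a $\d\dbar\log$ term which, crucially, is bounded above \emph{independently} of $\delta$ because $D$ and $E$ share no irreducible component, so the poles of $\mu_\delta$ and its zeros are supported on mutually disjoint divisors along which the cone reference metric is already smooth. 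This is precisely the adaptation of \cite{CGP} carried out in \cite{Gue}.
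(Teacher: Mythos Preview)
The paper does not actually give a proof of this theorem: it is stated as a consequence of \cite{Gue} and \cite{CGP}, with the existence of the weak solution attributed to \cite{BEGZ}, and no further argument is supplied. Your proposal is a faithful outline of precisely the approximation scheme carried out in those references (regularise the right-hand side, solve the smooth equation via Aubin--Yau with the $e^{\varepsilon\varphi}$ term, obtain uniform $L^\infty$, Laplacian/conic, and interior higher-order estimates, then pass to the limit), so in substance it agrees with what the paper invokes.

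Two small remarks on wording. First, $D$ and $E$ need not be set-theoretically disjoint; the hypothesis only excludes common \emph{irreducible components}, so their supports may meet in codimension $\ge 2$. What you actually need (and what your last sentence correctly uses) is that the conic reference metric $\omega_\beta$ is smooth in a neighbourhood of the generic point of each $E_j$, which follows from the no-common-component assumption. Second, in the paper's convention $D=\sum a_i D_i$ plays the role of $\sum(1-\beta_i)D_i$, so the cone angles of $\omega_\beta$ along $D_i$ are $2\pi(1-a_i)$ rather than $2\pi a_i$. Neither point affects the validity of your sketch.
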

We observe that the existence of a solution is proved in \cite{BEGZ}.
As a consequence of Theorem 1, there exists $C >0$ such that the solution has on $X \smallsetminus (D \cup E)$ an upper bound 
$$ \omega+ \frac{i}{2 \pi} \d \dbar \varphi \leq  \frac{C  \omega}{ \prod_i |s_i|^{ 2a_i} }. $$
By the Monge-Amp\`ere equation, we find on $X \smallsetminus (D \cup E)$ a  lower bound 
$$ \omega+ \frac{i}{2 \pi} \d \dbar \varphi \geq  e^{\varepsilon \varphi} \frac{\prod|t_j|^{2 b_j}\omega}{\prod|s_i|^{2 a_i}} (\frac{C  }{ \prod_i |s_i|^{ 2a_i} })^{-(n-1)}. $$
Notice that since the solution is smooth on $X \smallsetminus (D \cup E)$, the above inequalities are satisfied pointwise.
By the result of \cite{BEGZ}, $|\varphi|$ is uniformly bounded on $X$. 
In particular, we have
$$ \omega+ \frac{i}{2 \pi} \d \dbar \varphi \geq   \frac{C\prod|t_j|^{2 b_j}\omega}{\prod|s_i|^{2 a_i}} (\frac{C  }{ \prod_i |s_i|^{ 2a_i} })^{-(n-1)}. $$
In conclusion outside $D \cup E$, the solution $\omega+\frac{i}{2 \pi} \d \dbar \varphi$ viewed as a Hermitian form over $T_X$ with respect to $\omega$ has positive eigenvalues bounded from above by $\frac{C  }{ \prod_i |s_i|^{ 2a_i} }$ and bounded from below by $\frac{C\prod|t_j|^{2 b_j}}{\prod|s_i|^{2 a_i}} (\frac{C  }{ \prod_i |s_i|^{ 2a_i} })^{-(n-1)}. $

Let us observe that for the singular metric on the determinant line bundle of the quotient bundle $Q$ given by a short exact sequence of vector bundles 
$$0 \to S \to T_X \to Q \to 0,$$
the curvature form is well-defined as a current. We detail the argument below.
Suppose we are in the situation of Theorem 1, with the same notation above.
Since the metric is smooth outside $D\cup E$, we only need to study the neighbourhood of $D \cup E$.
By a $C^{\infty}$ splitting of the exact sequence, we can view $Q$ as a subbundle of $T_X$. 
$\omega+\frac{i}{2 \pi} \d \dbar \varphi$ thus induces  a Hermitian form over $Q$ which we will denote by $\omega+\frac{i}{2 \pi} \d \dbar \varphi|_Q$.
By the minimax principle, for the induced Hermitian form on $Q$,
the eigenvalues are bounded from above by $\frac{C  }{ \prod_i |s_i|^{ 2a_i} }$ and bounded from below by $\frac{C\prod|t_j|^{2 b_j}}{\prod|s_i|^{2 a_i}} (\frac{C  }{ \prod_i |s_i|^{ 2a_i} })^{-(n-1)}. $
To prove that the curvature of $\det(Q)$ is well-defined as a current (not necessarily positive), it is enough to prove that $\log(\det(\omega+\frac{i}{2 \pi} \d \dbar \varphi|_Q)) \in L^1_{\loc}$.
$\det(\omega+\frac{i}{2 \pi} \d \dbar \varphi|_Q)$ is the product of all eigenvalues of the Hermitian form 
$\omega+\frac{i}{2 \pi} \d \dbar \varphi|_Q$.
Thus we get the estimate  for the potential
$$|\log(\det(\omega+\frac{i}{2 \pi} \d \dbar \varphi|_Q))| \leq \sum_i C_i \log|s_i|^2+ \sum_j C_j \log|t_j|^2+C$$
for some $C_i >0$, $C_j >0$ and $C >0$.
In the following, we will refer to this type of control as potentials possessing at most logarithmic poles along $D \cup E$.
Notice also that for any $i$, $\log|z_i|$ is locally integrable with respect to the euclidean metric.
In particular, the curvature of the induced metric on $\det(Q)$ is well defined as a current since locally it is the $i\d \dbar$ of some $L^1_{\loc}$ function.

For any global potential $\psi$ (defined on $X$) possessing at most logarithmic poles along $D \cup E$.
As analogy to Monge-Amp\`ere operator in the sense of Bedford-Taylor \cite{BT}, we want to define
$i \d \dbar \psi \wedge \omega_{\varphi}^{n-1}$.
Fix $a \in [0,1[$, 
the integration
$$\int \log(r) r^{1-2a}=\frac{1}{2-2 a} (\log(r)-\frac{1}{2-2 a})r^{2-2 a}$$
is finite over $[0,1]$.
In particular,
$\log \vert z \vert \vert z \vert^{-2a}$ as a function $z \in \C$ is locally integrable near 0 with respect to the  Lebesgue measure.
Thus the coefficients of
$\psi \wedge \omega_{\varphi}^{n-1}$
are locally integrable with respect to the Lebesgue measure.
Define 
$$i \d \dbar \psi \wedge \omega_{\varphi}^{n-1}:= i \d\dbar ( \psi \wedge \omega_{\varphi}^{n-1}).$$
By Stokes theorem, we have that
$$\int_X i \d \dbar \psi \wedge \omega_{\varphi}^{n-1}=0.$$
\section{Albanese morphism}
In this section, we start by generalising the results of \cite{Cao13} to log smooth cases with standard orbifold coefficients. 
To start with, we need the following result.
\begin{mythm}
Let $(X, \omega)$ be an n-dimensional compact K\"ahler manifold.
Let $D=\sum (1-1/n_j) Y_j=\sum (1-1/n_j) [s_j=0]$ a divisor with simple normal crossings with $n_j \in \N^*$ such that $-(K_X+D)$ is nef. 
Let
$$0 =\cE_0 \subset \cE_1 \subset \cdots \subset \cE_s=T_X$$ be a filtration of torsion-free subsheaves such that $\cE_{i+1}/\cE_i$ is an $\omega$-stable torsion-free subsheaf of $T_X/\cE_i$ of maximal slope. 
Then for any $i$, the slope of $\cE_{i+1}/\cE_i$ with respect to $\omega^{n-1}$,
namely
$$\mu(\cE_{i+1}/\cE_i):= \int_X c_1(\cE_{i+1}/\cE_i) \wedge \omega^{n-1},$$
is positive.
\end{mythm}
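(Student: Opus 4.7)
The strategy is to adapt the argument of \cite{Cao13} for $-K_X$ nef on smooth Kähler manifolds to the log smooth standard orbifold setting, replacing smooth Kähler-Einstein approximations with the conic Kähler metrics constructed above. Since $-(K_X+D)$ is nef, for each $\varepsilon>0$ I can pick a smooth Kähler representative $\theta_\varepsilon$ of $c_1(-(K_X+D))+\varepsilon\{\omega\}$ and invoke Proposition 2 to solve a Monge-Ampère equation producing a conic Kähler metric $\omega_\varepsilon = \omega+i\partial\bar\partial\varphi_\varepsilon$ on $(X,D)$ with
$$\text{Ric}(\omega_\varepsilon)+[D] = -\varepsilon\,\omega_\varepsilon+\theta_\varepsilon.$$
By Proposition 3, the pull-back of $\omega_\varepsilon$ to each ramified cover $\tilde U_i$ is a smooth Kähler metric whose Ricci form is bounded below by $-\varepsilon \pi_i^*\omega_\varepsilon+\pi_i^*\theta_\varepsilon$.

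Next, for a fixed index $i$, form the torsion-free quotient $\mathcal{Q}_i := T_X/\mathcal{E}_i$, which is locally free outside an analytic set $Z$ of codimension $\geq 2$. A $C^\infty$ splitting of $0\to\mathcal{E}_i\to T_X\to\mathcal{Q}_i\to 0$ over $X\setminus Z$ realises $\mathcal{Q}_i$ as a subbundle of $T_X$ there, and $\omega_\varepsilon$ then induces a singular Hermitian metric $h_\varepsilon$ on $\det\mathcal{Q}_i$. The discussion at the end of Section 1 shows that the Chern curvature of $(\det\mathcal{Q}_i,h_\varepsilon)$ extends as a closed current on $X$ with local potential having at most logarithmic poles along $D$, so that the Stokes identity $\int_X i\partial\bar\partial\psi\wedge\omega^{n-1}=0$ applies. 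The Gauss-Codazzi identity on $X\setminus Z$ then yields
$$c_1(\mathcal{Q}_i)\cdot[\omega]^{n-1} = \int_{X\setminus Z} \mathrm{tr}_{\mathcal{Q}_i}\bigl(\Theta_{T_X,\omega_\varepsilon}\bigr)\wedge\omega^{n-1} + \int_{X\setminus Z}|\beta_\varepsilon|^2\,\omega^{n-1},$$
where $\beta_\varepsilon$ denotes the second fundamental form, so the last term is non-negative. A minimax argument for the Hermitian endomorphism $\Theta_{T_X,\omega_\varepsilon}$, applied to the $r$-plane fibre of $\mathcal{Q}_i$ of maximal slope, allows one to bound the first term from below by $-C\varepsilon$ in terms of the Ricci lower bound.

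Letting $\varepsilon\to 0$ gives $\mu(\mathcal{Q}_i)\geq 0$ for every $i$. Since $\mathcal{E}_{i+1}/\mathcal{E}_i$ is of maximal slope among torsion-free subsheaves of $\mathcal{Q}_i$, its slope is at least $\mu(\mathcal{Q}_i)\geq 0$. Strict positivity of each $\mu(\mathcal{E}_{i+1}/\mathcal{E}_i)$ then follows from the strict decrease of slopes along the Harder-Narasimhan filtration combined with the nonnegativity of the final slope $\mu(T_X/\mathcal{E}_{s-1})$, since if any intermediate slope vanished the subsequent ones would be negative, contradicting the bound just obtained.

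\textbf{Main obstacle.} The delicate part is to justify the integration-by-parts step and the associated limit rigorously. One must cut off both around $D$ (where $\omega_\varepsilon$ is only conic and $h_\varepsilon$ is singular) and around $Z\setminus D$ (where the splitting of the exact sequence fails and $\mathcal{E}_i$ may only be reflexive), and check that the resulting boundary contributions vanish. On the conic side this is handled by the logarithmic pole control and the local integrability of $\log|z_i|$ established at the end of Section 1; on the reflexive side one uses that $\mathrm{codim}(\mathrm{Sing}(\mathcal{E}_i))\geq 2$ together with extension results for plurisubharmonic functions. A secondary technical point is the minimax estimate: the full trace $\mathrm{Ric}(\omega_\varepsilon)$ is bounded below, but one needs a bound on the partial trace over the $r$-dimensional subspace corresponding to $\mathcal{Q}_i$, which is why the maximal slope condition on $\mathcal{E}_{i+1}/\mathcal{E}_i$ is essential — it provides the optimal choice for the variational argument.
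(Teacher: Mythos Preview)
Your overall architecture matches the paper's: solve a conic Monge--Amp\`ere equation to get $\omega_\varepsilon$ with $\mathrm{Ric}(\omega_\varepsilon)\ge -\varepsilon\omega_\varepsilon$ on the orbifold covers, compare the induced metric on $\det(T_X/\mathcal{E}_i)$ with a smooth reference, integrate by parts, and let $\varepsilon\to 0$. The paper packages the Gauss--Codazzi step through the global section $\tau\in H^0(X,\det(T_X/\mathcal{E}_i)\otimes\Omega^r_X)$ and the formula for $i\partial\bar\partial\log|\tau|^2_{\omega_\varepsilon,h}$, which makes the integrability near the zero set of $\tau$ (your set $Z$) explicit via Hironaka; your $C^\infty$-splitting formulation is equivalent on the locus where both make sense.

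There is, however, a genuine conceptual error in your account of the curvature estimate. You write that one needs a ``minimax argument'' and that ``the maximal slope condition on $\mathcal{E}_{i+1}/\mathcal{E}_i$ is essential --- it provides the optimal choice for the variational argument.'' This is not how the bound works. For a K\"ahler metric the Bianchi symmetry $R_{i\bar j k\bar\ell}=R_{k\bar j i\bar\ell}$ implies that contracting the \emph{form} indices of $\Theta_{T_X,\omega_\varepsilon}$ against $\omega_\varepsilon^{n-1}$ yields precisely the Ricci endomorphism of $\omega_\varepsilon$. Once $\mathrm{Ric}(\omega_\varepsilon)\ge -\varepsilon\,\omega_\varepsilon$, the partial trace over \emph{any} $r$-dimensional subspace is $\ge -r\varepsilon$; no variational choice is involved and the maximal-slope hypothesis plays no role at this step (it is only used, as in the paper's first sentence, to reduce to $\mu(T_X/\mathcal{E}_i)\ge 0$). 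Relatedly, your displayed Gauss--Codazzi identity must be written with $\omega_\varepsilon^{n-1}$, not $\omega^{n-1}$: the pointwise identification with Ricci only holds when you trace against the same metric, and one then uses $[\omega_\varepsilon]=[\omega]$ only at the cohomological level on the left-hand side. This is exactly what the paper does, citing Proposition~2.7 of \cite{Cao13}.

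Finally, your closing paragraph on ``strict positivity'' should be dropped. The paper's proof (and yours, correctly executed) yields $\mu(T_X/\mathcal{E}_i)\ge 0$, hence $\mu(\mathcal{E}_{i+1}/\mathcal{E}_i)\ge 0$; ``positive'' in the statement is in the weak sense. Your proposed argument for strictness is also invalid: the filtration here refines the Harder--Narasimhan filtration into stable pieces, so consecutive slopes need not strictly decrease (they are equal within a semistable block), and a vanishing intermediate slope does not force later ones to be negative.
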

\begin{proof}
By the stability condition, to prove the theorem, it is sufficient to prove that for any $i$
$$\int_X c_1(T_X/\cE_i)\wedge \omega^{n-1} \geq 0.$$
Let $r$ be the generic rank of $T_X/\cE_i$ for some fixed $i$.
The naturally induced morphism $\wedge^r T_X \to \det(T_X/\cE_i)$ corresponds to a section $\tau \in H^0(X, \det(T_X/\cE_i) \otimes \Omega^r_X )$.
$\tau$ is non-vanishing outside a closed analytic set of codimension at least two on which $T_X/\cE_i$ is locally free.
Fix an arbitrary smooth metric $h$ on $\det(T_X/ \cE_i)$.

The critical step is the existence of positive closed $(1,1)$-current in a K\"ahler class which is smooth outside an SNC divisor and whose Ricci curvature can be taken ``arbitrary small" outside the divisor using the theorems in \cite{GP} and \cite{CGP}.

Let $T_\varepsilon$ be a sequence of smooth forms in $c_1(-(K_X+D))$ such that $T_\varepsilon \geq - \varepsilon \omega$ whose existence is ensured by the nefness condition.
To get the lower bound, we want to solve the following K\"ahler-Einstein type of equation
$$\mathrm{Ric}(\omega_{\varphi_{\varepsilon}})=-\varepsilon \omega_{\varphi_{\varepsilon}} + \varepsilon \omega + T_{\varepsilon} +[D]$$
where $\omega_{\varphi_{\varepsilon}}:= \omega+i \d \dbar \varphi_{\varepsilon}$ is the unknown.
Notice that both sides belong to the class $c_1(-K_{X})$.
In order to solve the K\"ahler-Einstein type of equation, we thus solve the following Monge-Amp\`ere equation by Theorem 1.
Let $\gamma$ be a smooth representative of the class $\{[D]\}$, which is induced from the curvature forms of some smooth metrics $(\cO(Y_i),h_i)$.
By the $\d \dbar$-lemma, there exists $f_{\varepsilon} \in C^{\infty}(X)$ such that $T_{\varepsilon}+\gamma=\mathrm{Ric}(\omega)+\frac{i}{2\pi} \d \dbar f_{\varepsilon}$.
The Monge-Amp\`ere equation equivalent to the K\"ahler-Einstein type of equation can be written as
$$\omega_{ \varphi_{\varepsilon}}^n=\frac{\omega^n e^{\varepsilon \varphi_\epsilon-f_{\varepsilon}}}{\prod_i |s_i|_{h_i}^{2(1-1/n_i)}}.$$

Take the same notations as after Proposition 1.
Note that $\pi^* \omega_{ \varphi_{\varepsilon}}$ is a smooth metric when pulling back onto the ramified cover by Proposition 3.
Calculate on the ramified cover
$$i \d \dbar \pi^* \log \vert \tau \vert_{\omega_{\varphi_\epsilon,h}}^2= \frac{i \{D' \pi^*  \tau , D' \pi^* \tau \}}{\pi^* \vert \tau \vert^2}- \frac{i \{D' \pi^*  \tau ,  \pi^* \tau \}\wedge \{ \pi^*  \tau , D' \pi^* \tau \}}{\pi^* \vert \tau \vert^4}$$
$$-\pi^* i \Theta(\det(T_X/\cE_i),h)- \frac{\{ \pi^*  \tau , i \Theta(\pi^* \omega_{\varphi_\epsilon}) \pi^* \tau \}}{\pi^* \vert \tau \vert^2}$$
where $\{ \}$ is a canonical sesquilinear pairing
$$C^\infty(\wedge^p T^*_X \otimes \Omega_X^r \otimes \det(T_X/\cE_i))\times C^\infty(\wedge^q T^*_X \otimes \Omega_X^r \otimes \det(T_X/\cE_i)) \to C^\infty(\wedge^{p+q} T^*_X ).$$
Note that the right-handed term on the first line is positive in the sense of currents by Schwarz inequality.
Observe that the right-handed term is locally integrable (and hence well defines its product with $\pi^* \omega_{\varphi_\epsilon}^{n-1}$ in the sense of currents on the ramified cover).
Since $\pi^* \omega_{\varphi_\epsilon}$ is smooth, 
the coefficients of
$\frac{\{ \pi^*  \tau , i \Theta(\pi^* \omega_{\varphi_\epsilon}) \pi^* \tau \}}{\pi^* \vert \tau \vert^2}$ is locally bounded.
On the other hand,
$\vert D' \pi^* \tau \vert^2/\vert \pi^* \tau \vert^2 $ is locally integrable with respect to the Lebesgue measure.
To see this, let $\cI$ be the (local) ideal defined by the coefficients of $\pi^* \tau$.
Without loss of generality, it is enough to consider the case that $\cI$ is non-trivial.
By Hironaka's resolution of singularities \cite{Hir64}, there exists a modification $p: U' \to \tilde{U} $ such that $\cI \cdot \cO_{U'}=\cO(- \sum_i \lambda_i E_i)$ with $\sum_i \lambda_i E_i$ an effective SNC divisor.
Note that since $\tau$ is non-vanishing outside a codimension 2 set, $p$ is not an identity map.
Denote by $D \cI$ the (local) ideal defined by the differentials of the coefficients of $\pi^* \tau$.
Then $D \cI \cdot \cO_{U'}$ is contained in $\cO(- \sum_i (\lambda_i-1) E_i)$.
$K_{U'/\tilde{U}}=\sum_i \nu_i E_i$ with $\nu_i \geq 1$.
In particular, 
the pullback of Lebesgue measure can be written as $\prod_i \vert s_{E_i} \vert^{2 \nu_i}$ times a smooth nowhere vanishing form.
Thus the coefficient of the product of $p^* (\vert D' \pi^* \tau \vert^2/\vert \pi^* \tau \vert^2)$ with the pullback of Lebesgue measure is locally bounded on $U'$.
In particular, $\vert D' \pi^* \tau \vert^2/\vert \pi^* \tau \vert^2 $ is locally integrable.

On the ramified cover, the Chern curvature, which is also the Levi-Civita curvature, satisfies
$$\mathrm{Ric}(\pi^* \omega_{\varphi_{\varepsilon}})=-\varepsilon \pi^* \omega_{\varphi_{\varepsilon}}+\pi^* (T_\epsilon+\epsilon \omega) \geq -\varepsilon \pi^* \omega_{\varphi_{\varepsilon}}.$$
Consider
$$i \d \dbar \pi^* \log \vert \tau \vert_{\omega_{\varphi_\epsilon,h}}^2 \wedge \pi^* \omega_{\varphi_\epsilon}^{n-1} \geq  (-\pi^* i \Theta(\det(T_X/\cE_i),h)- \frac{\{ \pi^*  \tau , i \Theta(\pi^* \omega_{\varphi_\epsilon}) \pi^* \tau \}}{\pi^* \vert \tau \vert^2}) \wedge \pi^* \omega_{\varphi_\epsilon}^{n-1}.$$
On the other hand, by Proposition 2.7 \cite{Cao13}, with local curvature calculations, we have
$$\frac{\{ \pi^*  \tau , i \Theta(\pi^* \omega_{\varphi_\epsilon}) \pi^* \tau \}}{\pi^* \vert \tau \vert^2} \wedge \pi^* \omega_{\varphi_\epsilon}^{n-1} \leq \epsilon \pi^* \omega_{\varphi_\epsilon}^n.$$
Note that 
$\log \vert \tau \vert_{\omega_{\varphi_\epsilon,h}}^2$
possesses at most logarithmic poles along $D$.
In fact, the dual metric on $T_X^*$ could only vanish along $D$ when the initial metric on $T_X$ has conic singularities.
In particular,
$i \d \dbar \log \vert \tau \vert_{\omega_{\varphi_\epsilon,h}}^2 \wedge \omega_{\varphi_\epsilon}^{n-1}$
is well-defined
as discussed after Theorem 1.

Let $\theta_i$ be a partition of unity associated with the chosen open cover.
$$0=\sum_i \int_X \theta_i i \d \dbar \log \vert \tau \vert_{\omega_{\varphi_\epsilon,h}}^2 \wedge \omega_{\varphi_\epsilon}^{n-1}=\sum_i \int_X i \d \dbar \theta_i  \log \vert \tau \vert_{\omega_{\varphi_\epsilon,h}}^2 \wedge \omega_{\varphi_\epsilon}^{n-1}$$
$$=\sum_i \int_{\tilde{U}_i}  \frac{1}{N_i} \pi_i^* (i \d \dbar\theta_i   \log \vert \tau \vert_{\omega_{\varphi_\epsilon,h}}^2 \wedge \omega_{\varphi_\epsilon}^{n-1})=\sum_i \int_{\tilde{U}_i}  \frac{1}{N_i} \pi_i^* (\theta_i i \d \dbar  \log \vert \tau \vert_{\omega_{\varphi_\epsilon,h}}^2 \wedge \omega_{\varphi_\epsilon}^{n-1}).$$
Thus we have
$$\int_X c_1(T_X/\cE_i)\wedge \omega^{n-1}=\sum_i \int_{\tilde{U}_i}  \frac{1}{N_i} \pi_i^* (\theta_i i \d \dbar  \log \vert \tau \vert_{\omega_{\varphi_\epsilon,h}}^2 \wedge \omega_{\varphi_\epsilon}^{n-1}) +\int_X i \Theta(\det( T_X/\cE_i), h)\wedge \omega_{\varphi_\epsilon}^{n-1}$$
$$ \geq  - \epsilon\sum_i \int_{\tilde{U}_i}  \frac{1}{N_i} \pi_i^* (\theta_i \omega_{\varphi_\epsilon}^{n})=-\epsilon \int_X  \omega_{\varphi_\epsilon}^{n}=-\epsilon \int_X \omega^n.$$
The conclusion follows by taking $\epsilon \to 0+$
\end{proof}

Now we consider the general coefficients case following a suggestion of Mihai P\u{a}un.
\begin{mythm}
Let $(X, \omega)$ be an n-dimensional compact K\"ahler manifold.
Let $D=\sum (1-b_j) D_j=\sum (1-b_j) [s_j=0]$ a divisor with simple normal crossings with $b_j \in ]0,1[$ such that $-(K_X+D)$ is nef. 
Let
$$0 =\cE_0 \subset \cE_1 \subset \cdots \subset \cE_s=T_X$$ be a filtration of torsion-free subsheaves such that $\cE_{i+1}/\cE_i$ is an $\omega$-stable torsion-free subsheaf of $T_X/\cE_i$ of maximal slope. 
Then for any $i$, the slope of $\cE_{i+1}/\cE_i$ with respect to $\omega^{n-1}$,
namely
$$\mu(\cE_{i+1}/\cE_i):= \int_X c_1(\cE_{i+1}/\cE_i) \wedge \omega^{n-1},$$
is positive.
\end{mythm}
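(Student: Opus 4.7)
The plan is to run the same argument as in Theorem 2, replacing the ramified cover machinery (which is not available for non-orbifold coefficients) by the general Monge-Amp\`ere theory of Theorem 1 together with the integration-by-parts framework for currents with at most logarithmic poles developed in the discussion after Theorem 1.

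First, by the $\omega$-stability of $\cE_{i+1}/\cE_i$ as a subsheaf of $T_X/\cE_i$ of maximal slope, it is enough to show that $\int_X c_1(T_X/\cE_i) \wedge \omega^{n-1} \geq 0$ for each $i$. As in Theorem 2, pick $r$ the generic rank of $T_X/\cE_i$, form the section $\tau \in H^0(X, \det(T_X/\cE_i) \otimes \Omega_X^r)$ coming from $\wedge^r T_X \to \det(T_X/\cE_i)$, and fix a smooth reference metric $h$ on $\det(T_X/\cE_i)$. The zero locus of $\tau$ has codimension at least two.

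Next, I would replace the K\"ahler--Einstein equation solved on the ramified cover by its analogue solved via Theorem 1. Choose smooth forms $T_\varepsilon \in c_1(-(K_X + D))$ with $T_\varepsilon \geq -\varepsilon \omega$, and solve
\[
\omega_{\varphi_\varepsilon}^n = \frac{\omega^n\, e^{\varepsilon \varphi_\varepsilon - f_\varepsilon}}{\prod_i |s_i|_{h_i}^{2(1-b_i)}}
\]
with $T_\varepsilon + \gamma = \mathrm{Ric}(\omega) + \frac{i}{2\pi}\d\dbar f_\varepsilon$. By Theorem 1, $\omega_{\varphi_\varepsilon}$ is smooth on $X \smallsetminus D$, satisfies the Ricci inequality $\mathrm{Ric}(\omega_{\varphi_\varepsilon}) \geq -\varepsilon \omega_{\varphi_\varepsilon}$ there, and its eigenvalues with respect to $\omega$ enjoy the explicit upper and lower bounds recorded after Theorem 1. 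In particular $\log|\tau|^2_{\omega_{\varphi_\varepsilon}, h}$ has at most logarithmic poles along $D$ (with a sign now controlled on both sides, since the dual metric on $\Omega_X^r$ has poles bounded by the coefficients $b_i$).

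The computation of $i\d\dbar \log|\tau|^2_{\omega_{\varphi_\varepsilon}, h}$ as a current on $X \smallsetminus D$ is the same as in Theorem 2, but now performed directly downstairs without any pullback by $\pi$:
\[
i\d\dbar \log|\tau|^2_{\omega_{\varphi_\varepsilon}, h} \wedge \omega_{\varphi_\varepsilon}^{n-1} \geq \Bigl( -i\Theta(\det(T_X/\cE_i), h) - \tfrac{\{\tau, i\Theta(\omega_{\varphi_\varepsilon}) \tau\}}{|\tau|^2} \Bigr) \wedge \omega_{\varphi_\varepsilon}^{n-1}.
\]
Each piece has to be interpreted globally as a current: the integrability of $|D'\tau|^2/|\tau|^2$ against Lebesgue measure, hence against $\omega_{\varphi_\varepsilon}^{n-1}$ (which is bounded above by a power of $\prod_i |s_i|^{-2 a_i}$), still follows from Hironaka's resolution of the ideal generated by the coefficients of $\tau$ exactly as in Theorem 2, using that $\tau$ vanishes only in codimension $\geq 2$. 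The remaining mixed term is controlled pointwise on $X \smallsetminus D$ by Proposition 2.7 of \cite{Cao13} applied to the K\"ahler--Einstein type inequality: $\frac{\{\tau, i\Theta(\omega_{\varphi_\varepsilon})\tau\}}{|\tau|^2} \wedge \omega_{\varphi_\varepsilon}^{n-1} \leq \varepsilon\, \omega_{\varphi_\varepsilon}^n$.

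Finally, since $\log|\tau|^2_{\omega_{\varphi_\varepsilon}, h}$ has at most logarithmic poles along $D$, the discussion after Theorem 1 shows that $i\d\dbar \log|\tau|^2_{\omega_{\varphi_\varepsilon}, h} \wedge \omega_{\varphi_\varepsilon}^{n-1}$ is well-defined as a global current and its integral against a partition of unity vanishes by Stokes. Putting everything together yields
\[
\int_X c_1(T_X/\cE_i) \wedge \omega^{n-1} \geq -\varepsilon \int_X \omega^n,
\]
and letting $\varepsilon \to 0$ finishes the proof. The main obstacle I anticipate is the absence of a smooth lift: one must verify that the current-theoretic manipulations of the discussion after Theorem 1 (in particular well-definedness of the wedge product and Stokes with a partition of unity) are strong enough replacements for the pointwise smoothness available in the orbifold case, and that Cao's curvature inequality indeed survives in this weaker setting because it is a pointwise statement on $X \smallsetminus D$ depending only on the Ricci lower bound.
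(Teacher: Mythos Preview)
Your proposal has a genuine gap at the final integration step. You correctly set up $\tau$, the conic metrics $\omega_{\varphi_\varepsilon}$, and the pointwise inequality on $X\setminus D$, but the appeal to the Bedford--Taylor-type framework after Theorem~1 is not enough to close the argument.

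The difficulty is that $\log|\tau|^2_{\omega_{\varphi_\varepsilon},h}$ can acquire \emph{divisorial} poles along components of $D$, not only along the codimension-$\ge 2$ zero locus of $\tau$. In local coordinates with $\mathrm{Supp}(D)=\{z_1\cdots z_d=0\}$ one has $|\tau|^2_{\omega_{\varphi_\varepsilon}}\sim\sum_I\prod_{i_j\in I\cap[1,d]}|z_{i_j}|^{2(1-b_{i_j})}|\tau_I|^2$, so nothing prevents $\log|\tau|^2_{\omega_{\varphi_\varepsilon}}$ from having strictly positive generic Lelong number along some $D_j$. The discussion after Theorem~1 defines $i\d\dbar\psi\wedge\omega_{\varphi_\varepsilon}^{n-1}:=i\d\dbar(\psi\,\omega_{\varphi_\varepsilon}^{n-1})$ as a current of total integral zero, but it does \emph{not} assert that this current agrees with, or is bounded below by, the pointwise expression on $X\setminus D$ without residual mass on $D$. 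Equivalently, if you run a cut-off $\chi_\delta$ shrinking onto $D$, the boundary term involves $\log|\tau|^2_{\omega_{\varphi_\varepsilon}}$ against a Poincar\'e-type form, and $\log|z|\cdot(|z|^2\log^2|z|)^{-1}$ is not integrable near $0$; there is no reason this term tends to $0$. Your sentence ``the discussion after Theorem~1 shows \ldots\ its integral against a partition of unity vanishes by Stokes'' hides exactly this point.

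The paper's proof handles this by an extra, nontrivial step you omit. It introduces an auxiliary metric $\omega_{\lceil D\rceil}$, extracts the divisorial valuation $\mu(\overline{\cI},D_j)$ of the ideal attached to $|\tau|^2_{\omega_{\lceil D\rceil}}$ along each $D_j$, and subtracts the section $\sigma=\prod_j s_{D_j}^{\mu(\overline{\cI},D_j)}$ so that $\log(|\tau|^2/|\sigma|^2)$ has \emph{no} component of $D$ in its pole set. Only then does a cut-off argument with $\rho=\log\log(1/\prod_i|s_{D_i}|^2)$ succeed: the integrability of $\log(|\tau|^2/|\sigma|^2)\,i\d\dbar\rho\wedge\omega_{\varphi_\varepsilon}^{n-1}$ and of the companion term with $i\d\rho\wedge\dbar\rho$ is established via an auxiliary branched cover in the transverse variables combined with a uniform Skoda integrability estimate, and dominated convergence then kills the boundary term. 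This also produces an extra nonnegative contribution $\sum_j\mu(\overline{\cI},D_j)\,c_1(\cO(D_j))\cdot\omega^{n-1}$ in the final inequality, which your outline misses. Finally, your Hironaka argument for the integrability of $|D'\tau|^2/|\tau|^2$ against $\omega_{\varphi_\varepsilon}^{n-1}$ does not transfer from Theorem~2: there it was run on the ramified cover where $\pi^*\omega_{\varphi_\varepsilon}$ is smooth, whereas here both $D'$ and $\omega_{\varphi_\varepsilon}^{n-1}$ are singular along $D$; the paper avoids needing this term in isolation by working with the cut-off throughout.
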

\begin{proof}
Construct as above proof of Theorem 2 a section $\tau \in H^0(X, \det(T_X/\cE_i) \otimes \Omega^r_X )$.
As in the above theorem 2, construct a sequence of metrics $\omega_{\varphi_\epsilon}$ with conic singularities by solving Monge-Ampère equations.
Consider local coordinate $(z^1, \cdots, z^n)$ such that the support of $D$ is given by $ \{ z^1 \cdots z^d =0 \} $.
In the general case, we do a cut-off in a tubular neighbourhood of $D$ of radius $\delta$ and study the asymptotic behavior as $\delta \to 0+$ to get the estimate of the slope from the integration of $i \d \dbar \log \vert \tau \vert_{\omega_{\varphi_\epsilon,h}}^2 \wedge \omega_{\varphi_\epsilon}^{n-1}$ over $X \setminus D$.
Locally
$$\tau=\sum_{I, |I|=r} \tau_I dz^I$$
with multi-index $I=\{i_1, \cdots, i_r\}$ of length $r$
and
$|\tau|^2_{\omega_{\varphi_\epsilon}}$ by the conic singularities is locally equivalent to
$\sum_{I, |I|=r} \prod_{j, 1 \leq i_j \leq d} |z_{i_j}|^{2-2b_{i_j}} |\tau_I|^2$.
Near the origin,
$$O(1) \leq -\log |\tau|^2_{\omega_{\varphi_\epsilon}} \leq -\log (\sum_{I, |I|=r} \prod_{j, 1 \leq i_j \leq d} |z_{i_j}|^{2} |\tau_I|^2)+O(1)$$
where $O(1)$ means a bounded term.
Thus to study the local integrability of $-\log |\tau|^2_{\omega_{\varphi_\epsilon}} $ with respect to some positive locally finite measure,
it is enough to study the local integrability of $-\log (\sum_{I, |I|=r} \prod_{j, 1 \leq i_j \leq d} |z_{i_j}|^{2} |\tau_I|^2)$
with respect to that measure
which has the same singularities as some local ideals.
Note that outside the support of $D$ (where the metric is smooth), pointwise, we have
$$i \d \dbar \log \vert \tau \vert_{\omega_{\varphi_\epsilon,h}}^2 \wedge  \omega_{\varphi_\epsilon}^{n-1} \geq  (- i \Theta(\det(T_X/\cE_i),h)- \epsilon \omega_{\varphi_\epsilon}) \wedge \omega_{\varphi_\epsilon}^{n-1}.$$

However, $-\log (\sum_{I, |I|=r} \prod_{j, 1 \leq i_j \leq d} |z_{i_j}|^{2} |\tau_I|^2)$ may have strict positive generic Lelong number along the support of $D$ which may not be integrable with respect to metric with Poincar\'e type singularities along $D$.
(For example, in the Poincar\'e type singularity case, the local integrability is equivalent to the fact that $1/(r |\log(r)|^\alpha)$ is locally integrable near 0 if and only if $\alpha >1$.)

Thus we need to rewrite $-\log (\sum_{I, |I|=r} \prod_{j, 1 \leq i_j \leq d} |z_{i_j}|^{2} |\tau_I|^2)$ in a golbal form and subtract the divisorial part along the support of $D$. 

Let $\omega_{\lceil D \rceil }$ be a smooth metric on $X \setminus D$ which is locally quasi-isometric near every point in $\lceil D \rceil=\{z_1 \cdots z_d=0\}$ to
$$\omega_{\lceil D \rceil}:= i\sum_{k=1}^d \frac{ dz_k \wedge d \bar z_k}{|z_k|^{2}} +i\sum_{k=d+1}^n dz_k \wedge d \bar z_k.  $$
(For example, this kind of metric can be constructed as follows. Cover $X$ by open coordinate charts such that $\lceil D \rceil$ can be written as zeros of local coordinates.
Assume furthermore that the coordinate charts on which $\lceil D \rceil$ is not irreducible do not intersect each other, up to taking some further refinement of the open cover.
Construct $\omega_{ \lceil D \rceil}$ by glueing local ones via a partition of unity.
Let us check that the glueing metric has desired properties.

Assume that $U_z$ is a coordinate chart such that $\lceil D \rceil \cap U_z=\{z_1 \cdots z_d =0\}$ and
that $U_w$ is a coordinate chart such that $\lceil D \rceil \cap U_w=\{w_1 =0\}$ with non-empty intersection.
Over $U_z \cap U_w$, $z_1 /w_1 \in \cO^{*}(U_z \cap U_w) $ since they define the same irreducible divisor on the intersection.
Thus for some $C>0$ large enough,
$i dz_1 \wedge d \overline{z}_1/ |z_1|^2+C i\sum_{k=1}^n dz_k \wedge d \bar z_k$ is equivalent to 
$i dw_1 \wedge d \overline{w}_1/ |w_1|^2+i\sum_{k=2}^n dw_k \wedge d \bar w_k$ over any chosen relative compact subset of $U_z \cap U_w$.
Thus after the partition of unity, the glueing metric has desired properties.
Of course, the glueing metric is not K\"ahler on $X$.)

Let $\cI$ be the ideal sheaf defined (locally) by the coefficients of type $\prod_{j, 1 \leq i_j \leq d} z_{i_j} \tau_I$ to which $\log |\tau|^2_{\omega_{\lceil D \rceil}}$ is equivalent to.
Note that $\cI$ is locally defined, but its integral closure is globally defined.
In fact, the germ of the integral closure is the holomorphic function germs $f$ such that locally
$$\log |f|^2 \leq \log |\tau|^2_{\omega_{\lceil D \rceil }}+O(1).$$
We denote this globally defined integral closure by $\overline{\cI}$.
Consider the divisorial valuation associated with $D_i$
$$\mu(\overline{\cI}, D_i):= \max \{m \in \N, \overline{ \cI} \subset \cI_{D_i}^m \}$$
where $\cI_{D_i}$ is the identification of $\cO(-D_i)$ as an ideal sheaf.
Consider 
$$\sigma:= \prod_i s_{D_i}^{\mu(\overline{\cI}, D_i)}$$
which is a global holomorphic section on $X$ where $s_{D_i}$ are the canonical sections of $\cO(D_i)$.
Consider $\log(\vert \tau \vert_{\omega_{\lceil D \rceil },h}^2/|\sigma|^2)$ such that any $D_i$ is not an irreducible component of its pole set.

Now outside the support of $D$ (where the metric is smooth), pointwise, we have
$$i \d \dbar  \log (\vert \tau \vert_{\omega_{\varphi_\epsilon,h}}^2/|\sigma|^2) \wedge   \omega_{\varphi_\epsilon}^{n-1} \geq  (-  i \Theta(\det(T_X/\cE_i),h)- \epsilon  \omega_{\varphi_\epsilon}+\sum_j i \mu(\overline{\cI}, D_j) \Theta(\cO_{X}(D_j))) \wedge  \omega_{\varphi_\epsilon}^{n-1}.$$

Note that by choice of $\sigma$, the poles of
$ \log(\vert \tau \vert_{\omega_{\lceil D \rceil },h}^2/|\sigma|^2)$
contain no irreducible component of $D$.
In local coordinates,
$ \log(\vert \tau \vert_{\omega_{\lceil D \rceil },h}^2/|\sigma|^2)$
has the same singularities as ideal quotient $\cJ:=(\cI: \prod_i \cI_{D_i}^{\mu(\overline{\cI}, D_i)})$.
In the following, we study the local integrability of $ \log(\vert \tau \vert_{\omega_{\lceil D \rceil },h}^2/|\sigma|^2)$ with respect to some volume form of mixed type of Poincar\'e type and conic type.

Let $\chi$ be a cut-off function $[0, \infty [ \to [0,1]$.
Following section 9 of \cite{CGP},
define
$$\rho(x):= \log (\log \frac{1}{ \prod_i |s_{D_i}|^2}).$$
Note that
$-i \d \dbar \rho$ is bounded from above by Poincar\'e type singularities along the support of $D$, which in local coordinates can be written as
$$ i\sum_{k=1}^d \frac{ dz_k \wedge d \bar z_k}{|z_k|^{2} \log^2(|z_k|)} +i\sum_{k=d+1}^n dz_k \wedge d \bar z_k. $$
We claim that
$$-\log(\vert \tau \vert_{\omega_{\lceil D \rceil },h}^2/|\sigma|^2) i \d \dbar \rho \wedge \omega_{\varphi_\epsilon}^{n-1} $$
is locally integrable.
It is enough to show that near the support of $D$ in local coordinates, for any $i_0 \in [1,d]$,
$$-\log(\vert \tau \vert_{\omega_{\lceil D \rceil },h}^2/|\sigma|^2) \frac{i d(z_{i_0}) \wedge d (\overline{z_{i_0}})}{|z_{i_0}|^2 \log^2 (|z_{i_0}|)} \wedge \prod_{i \neq i_0, i \leq d} \frac{i d(z_i) \wedge d (\overline{z_i})}{|z_i|^{2(1- \beta_i)} } \wedge  \prod_{i \geq d+1}  i dz_i \wedge d \overline{z_i}$$
is locally integrable.
To simplify the notations,
in the following, we assume that $i_0=1$.

Consider the branched cover
\[
\begin{array}{cccc}
\pi:& \mathbb{D}^d \times \D ^{n-d} & \longrightarrow & \mathbb{D}^d \times \D ^{n-d} \\
&(z_1, z_2, \ldots, z_d, z_{d+1}, \ldots, z_n) & \longmapsto & (z_1, z_2^{n_2}, \ldots, z_d^{n_d}, z_{d+1}, \ldots, z_n).
\end{array}
\]
with $n_i(\forall i \geq 2)$ large enough such that
$\beta_i n_i \geq 1$.
The local integrability is equivalent to the fact that
$$\pi^* \big( -\log(\vert \tau \vert_{\omega_{\lceil D \rceil },h}^2/|\sigma|^2) \frac{i d(z_{1}) \wedge d (\overline{z_{1}})}{|z_{1}|^2 \log^2 (|z_{1}|)} \wedge \prod_{d \geq i \geq 2} \frac{i d(z_i) \wedge d (\overline{z_i})}{|z_i|^{2(1- \beta_i)} } \wedge  \prod_{i \geq d+1}  i dz_i \wedge d \overline{z_i} \big)$$
is locally integrable.

Let $f_j(1 \leq j \leq r)$ be the local generators of $\pi^* \cJ$.
A sufficient condition is that
$$ -\log(\sum_j \vert f_j \vert^2) \frac{i d(z_{1}) \wedge d (\overline{z_{1}})}{|z_{1}|^2 \log^2 (|z_{1}|)} \wedge \prod_{i \geq 2} i d(z_i) \wedge d (\overline{z_i}) $$
is locally integrable with the choice of branched cover.

By the construction of $\cJ$,
the restrictions of $f_j$ on $\{z_1=0 \}$ are not all identically zero.
By continuity, on $\mathbb{D}_{r_0} \times \D ^{n-1}$
for small radius $r_0$,
the restrictions of $f_j$ on $\{z_1=s \} (\forall |s| < r_0)$ are not all identically zero.

Consider $-\log(-\log(\sum_j \vert f_j \vert^2))$
as a continuous family of psh functions on $\D^{n-1}$ parametrized by $z_1 \in \D_{r_0}$.
Note that the Lelong number of any psh function in this family at any point in $\D^{n-1}$ is 0.

By uniform Skoda integrability theorem (cf. e.g. Theorem 2.50 \cite{GZ17}), for any $s \in \D$ such that $|s| < r_0$,
$$\int_{\{s\} \times \D^{n-1}} -\log(\sum_j \vert f_j \vert^2)  \prod_{i \geq 2} i d(z_i) \wedge d (\overline{z_i}) \leq C$$
for some $C>0$ independent of $s$ by vanishing Lelong number.
Since the Poincar\'e metric in one variable is locally integrable,
we finish the proof of the claim about the local integrability by the Fubini theorem.

Similarly, by Schwarz inequalities, we can show that $$-\log(\vert \tau \vert_{\omega_{\lceil D \rceil },h}^2/|\sigma|^2) i \d  \rho \wedge \dbar \rho \wedge \omega_{\varphi_\epsilon}^{n-1}$$
is locally integrable.


For each  $\delta >0$, let $\theta_\delta : [0, \infty [ \to [0, 1]$ be a smooth function which is equal to zero on the
interval $[0, 1/\delta]$, and which is equal to 1 on the interval $[1 + 1/\delta, \infty]$. One may for example
define $\theta_\delta(x) = \chi(x - 1/\delta)$.
Consider $\chi_\delta: X \to [0,1]$ defined by
$$\chi_\delta(x):= 1- \theta_\delta(\rho(x)).$$
Consider Stokes's formula
$$\int_X \chi_\delta i \d \dbar (   \log (\vert \tau \vert_{\omega_{\varphi_\epsilon,h}}^2/|\sigma|^2) ) \wedge   \omega_{\varphi_\epsilon}^{n-1} =\int_X \d \chi_\delta \wedge i  \dbar  \log (\vert \tau \vert_{\omega_{\varphi_\epsilon,h}}^2/|\sigma|^2)  \wedge   \omega_{\varphi_\epsilon}^{n-1}
$$$$=-\int_X \dbar \chi_\delta \wedge i  \d  \log (\vert \tau \vert_{\omega_{\varphi_\epsilon,h}}^2/|\sigma|^2)  \wedge   \omega_{\varphi_\epsilon}^{n-1}=\int_X i \d \dbar \chi_\delta   \log (\vert \tau \vert_{\omega_{\varphi_\epsilon,h}}^2/|\sigma|^2)  \wedge   \omega_{\varphi_\epsilon}^{n-1}.$$
Note that the left-handed side term is pointwise bigger than $\chi_\delta (-  i \Theta(\det(T_X/\cE_i),h)- \epsilon  \omega_{\varphi_\epsilon}+\sum_j i \mu(\overline{\cI}, D_j) \Theta(\cO_{X}(D_j))) \wedge  \omega_{\varphi_\epsilon}^{n-1}$.
Moreover, the integration of the lower bound over $X$ tends to
$$\int_X (-c_1(T_X/\cE_i)-\epsilon \omega + \sum_j  \mu(\overline{\cI}, D_j) c_1(\cO(D_j))) \wedge \omega^{n-1}$$
as $\delta \to 0+$.

On the order hand, $i \d \dbar \chi_\delta= -\theta_\delta'(\rho)  i \d \dbar \rho- \theta_\delta''(\rho) i \d \rho \wedge \dbar \rho.$
Note that $|\theta_\delta''|,|\theta_\delta'|$ is uniformly bounded independent of $\delta$.
Thus
$\log (\vert \tau \vert_{\omega_{\varphi_\epsilon,h}}^2/|\sigma|^2) i \d \dbar \chi_\delta \wedge   \omega_{\varphi_\epsilon}^{n-1}$
is uniformly bounded by some integrable function by the previous claim.

By Lebesgue's dominated convergence, the limit of the right-handed term on the second line is 0 as $\delta \to 0+$.

Taking $\delta \to 0+$ gives 
$$0 \geq \int_X (-c_1(T_X/\cE_i)-\epsilon \omega + \sum_j  \mu(\overline{\cI}, D_j) c_1(\cO(D_j))) \wedge \omega^{n-1}$$
which finishes the proof by taking $\epsilon \to 0+$.
\end{proof}

Notice that a similar result is shown in Theorem 0.4 of \cite{Ou17} under the hypothesis that $-K_X$ is nef and that $X$ is projective with mild singularity. 

To illustrate the local integrability in the proof of Theorem 3, we give the following elementary example.
\begin{myex}
{\rm 
Take the same notations as in the proof of Theorem 3.
Assume that the complex dimension of $X$ is two and the support of $D$ has two irreducible components intersecting transversally at $x \in X$.
Assume that $\cJ$ is the maximal ideal at $x$.
Denote $(z_1,z_2)$ local coordinates near $x$ such that the support of $D$ is defined by $\{z_1 z_2=0\}$ where $x$ corresponds to the origin.

The local integrability near $x$ is equivalent to showing that in local coordinates, for any $i_0 \in [1,2]$,
$$-\log(\vert \tau \vert_{\omega_{\lceil D \rceil },h}^2/|\sigma|^2) \frac{i d(z_{i_0}) \wedge d (\overline{z_{i_0}})}{|z_{i_0}|^2 \log^2 (|z_{i_0}|)} \wedge \prod_{i \neq i_0, i \leq 2} \frac{i d(z_i) \wedge d (\overline{z_i})}{|z_i|^{2(1- \beta_i)} }$$
is locally integrable.

Let $\pi$ be the blow-up of $X$ at $x$. It is equivalent to showing that
$$\pi^* \big( -\log( |z_1|^2+|z_2|^2) \frac{i d(z_{i_0}) \wedge d (\overline{z_{i_0}})}{|z_{i_0}|^2 \log^2 (|z_{i_0}|)} \wedge \prod_{i \neq i_0, i \leq 2} \frac{i d(z_i) \wedge d (\overline{z_i})}{|z_i|^{2(1- \beta_i)} } \big)$$
is locally integrable near the exceptional divisor.
Without loss of generality, assume that $i_0=1$.
In local coordinates near $x$, $\pi$ is given by
$$\pi(w_1, w_2)=(w_1 w_s, \cdots, w_{s-1} w_s, w_s, w_{s+1}w_s, \cdots, w_{2} w_s).$$
In local coordinates, it is enough to show that
$$-\log|w_s|^2 \pi^*(\frac{i d(z_{1}) \wedge d (\overline{z_{1}})}{|z_{1}|^2 \log^2 (|z_{1}|)} \wedge  \frac{i d(z_2) \wedge d (\overline{z_2})}{|z_2|^{2(1- \beta_2)} } ) $$
is locally integrable near $\{w_s=0\}$.

If $s =2$, we have an upper bound for
$$-\log|w_2|^2  \frac{i d(w_1 w_2) \wedge d (\overline{w_1 w_2})}{|w_1 w_2|^2 \log^2 (|w_1 w_2|)} \wedge \frac{i d(w_2 ) \wedge d (\overline{w_2 })}{|w_2 |^{2(1- \beta_2)} }
$$
whose potential with respect to the Lebesgue measure is bounded by
$$-\log|w_2|^2  \times 1/(|w_1|^2 \log^2(w_1)) \times  1/ (|w_2|^{2(1- \beta_2)})$$
which is locally integrable.

If $s =1$, we have an upper bound for
$$-\log|w_1|^2  \frac{i d(w_1) \wedge d (\overline{w_1})}{|w_1|^2 \log^2 (|w_1|)} \wedge \frac{i d(w_1 w_2 ) \wedge d (\overline{w_1 w_2 })}{|w_1 w_2 |^{2(1- \beta_2)} }
$$
whose potential with respect to the Lebesgue measure is bounded by
$$-\log|w_1|^2  \times 1/(|w_1|^2 \log^2(w_1)) \times  |w_1|^{2 \beta_2}/ (|w_2|^{2(1- \beta_2)})$$
which is locally integrable.

}
\end{myex}

Now the arguments of Proposition 5.1 of \cite{Cao13} give the following corollary.
\begin{mycor}{\it
Let $(X, \omega)$ be an n-dimensional compact K\"ahler manifold.
Let $D=\sum (1-\beta_j) Y_j=\sum (1-\beta_j) [s_j=0]$ a divisor with simple normal crossings with $\beta_j \in ]0,1[$ such that $-(K_X+D)$ is nef. 
Then the Albanese morphism $\alpha_X$ is surjective with connected fibres.
In fact, the Albanese map is submersion outside an analytic set of codimension larger than 2.}
\end{mycor}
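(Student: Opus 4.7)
The plan is to follow the argument of Proposition 5.1 of \cite{Cao13}, with Theorem 3 supplying the slope positivity input. The Albanese differential $d\alpha_X$ at $x$ is dual to the evaluation map $\mathrm{ev}_x : H^0(X, \Omega^1_X) \to \Omega^1_{X,x}$, so $\alpha_X$ is submersive at $x$ precisely when $\mathrm{ev}_x$ has maximal rank $q := h^0(X, \Omega^1_X) = \dim \Alb(X)$. Denoting by $\cF$ the image of the sheaf-level evaluation $H^0(X, \Omega^1_X) \otimes \cO_X \to \Omega^1_X$, it suffices to show that $\cF$ is locally a subbundle of $\Omega^1_X$ of rank $q$ outside an analytic set of codimension at least two, and that it is trivialized there by its global sections.

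Combining Theorem 3 with the global generation of $\cF$ yields the required structural control. By Theorem 3 every non-zero torsion-free quotient of $T_X$ has $\omega^{n-1}$-slope $\geq 0$; dually, every non-zero torsion-free subsheaf of $\Omega^1_X$ has slope $\leq 0$, so $\mu_{\omega^{n-1}}(\cF) \leq 0$. On the other hand $\cF$ is a quotient of the trivial bundle $H^0(X, \Omega^1_X) \otimes \cO_X$, so $\mu_{\omega^{n-1}}(\cF) \geq 0$, and the same bound applies to every torsion-free quotient of $\cF$. Therefore $\cF$ is $\omega$-semistable of slope zero; by the theory of numerically flat sheaves on compact K\"ahler manifolds together with global generation, $\cF$ is locally free of rank $q$ and trivialized by its global sections off an analytic set of codimension $\geq 2$, which gives the submersion statement. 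Surjectivity is then automatic: $\alpha_X(X)$ is a closed analytic subset containing a dense open set on which $d\alpha_X$ has rank $q = \dim \Alb(X)$, and it generates $\Alb(X)$ as a group. For connected fibres, apply Stein factorization $\alpha_X = g \circ f$ with $f$ having connected fibres and $g : Y \to \Alb(X)$ finite: submersiveness of $\alpha_X$ off codimension two makes $g$ unramified outside a codimension-two subset of $\Alb(X)$, hence everywhere \'etale by purity of the branch locus on a smooth target, and the universal property of the Albanese then forces the resulting isogeny $g$ to be an isomorphism.

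The central difficulty is the passage from the abstract statement ``$\cF$ is $\omega$-semistable of slope zero and globally generated'' to the concrete one ``$\cF$ is a trivial rank-$q$ subbundle of $\Omega^1_X$ off a codimension-two analytic set''. This requires invoking the structure theory of numerically flat sheaves on compact K\"ahler manifolds (in the spirit of Demailly--Peternell--Schneider) together with a reflexive-hull argument to promote the control from codimension one to codimension two in the presence of the conic singularities along $D$. Once this step is in hand, both the submersion claim and the subsequent surjectivity and connected-fibre statements reduce to formal manipulations and standard properties of Stein factorization and the Albanese.
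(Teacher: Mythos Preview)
Your proposal is correct and follows essentially the same route as the paper: feed the slope-nonnegativity from Theorem~3 into the argument of Proposition~5.1 of \cite{Cao13} to get the submersion-off-codimension-two statement, and then run Stein factorization together with the universal property of the Albanese for connectedness of the fibres. The paper in fact defers the submersion step entirely to \cite{Cao13} and spells out the connected-fibres part via a homotopy-theoretic construction of the covering torus, mentioning purity of the branch locus as the alternative shortcut---which is exactly the variant you chose.

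One small correction: in your last paragraph you flag ``the presence of the conic singularities along $D$'' as a complication in the reflexive-hull step. This is a red herring. The conic metrics are used only inside the proof of Theorem~3; once the slope inequality $\mu_{\omega^{n-1}}(\cF)\le 0$ is available, the passage from $\cF$ to its saturation and the codimension-two control are pure sheaf theory on the smooth manifold $X$, and $D$ plays no further role.
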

\begin{proof}
The proof in \cite{Cao13} only uses the fact that the slopes with respect to $\omega^{n-1}$ of the sheaves obtained as graded pieces of the Harder-Narasimhan filtration are positive.
Hence using Theorem 3, the result is a direct consequence of his arguments.
For the convenience of the readers, we just give here proof of the fact that the fibres of the Albanese map are connected.
We follow the arguments in the Proposition 3.9 of \cite{DPS94}.

Let $X \to Y \to \Alb(X)$ be the Stein decomposition of the Albanese map with $Y=\mathrm{Spec}\;  \alpha_{X*}\cO_X$.
Since $X$ is smooth, $Y$ is normal.
We claim that the map $f: Y \to \Alb(X)$ is \'{e}tale.
The reason is as follows.
By the arguments in \cite{Cao13}, there exists $Z$ an analytic subset in $\Alb(X)$ with codimension at least 2 such that $X\smallsetminus \alpha_X^{-1}(Z) \to \Alb(X)\smallsetminus Z$ is submersion (thus a fibration).
Thus $Y \smallsetminus f^{-1}(Z) \to \Alb(X) \smallsetminus Z$ is \'{e}tale.
We denote by $F$ the fibre of the fibration $f|_{Y \smallsetminus f^{-1}(Z) }$ which is finite.
By the long exact sequence associated with a fibration, we have
$$\pi_1(F) \to \pi_1(Y \smallsetminus f^{-1}(Z)) \to \pi_1(\Alb(X) \smallsetminus Z) \to \pi_0(F)$$
where $\pi_1(F)=0$ and $\pi_0(F)$ is finite.
In particular, $\pi_1(Y \smallsetminus f^{-1}(Z))$ is a free Abelian group of rank $2q:=2 \dim_{\C} \Alb(X)$.
Notice that by the codimension condition, we have $\pi_1(\Alb(X) \smallsetminus Z) \cong \pi_1(\Alb(X))$.
$\Alb(X)$ is isomorphic to the quotient of the universal cover $\C^q$ of  $\Alb(X) $ under the group action $\pi_1(\Alb(X) )$.
Define $T$ to be the quotient of $\C^q$ under the group action $\pi_1(Y \smallsetminus f^{-1}(Z) )$ with the natural cover $p: T \to \Alb(X)$.
By the homotopy lifting property, there exists a map $g: Y \smallsetminus f^{-1}(Z) \to T$ such that $p \circ g=f|_{Y \smallsetminus f^{-1}(Z)}$.
Remark that $g$ is holomorphic since it is given by the composition of $f$ with the holomorphic local inverse of $p$.
Since $Y \smallsetminus f^{-1}(Z) \to \Alb(X) \smallsetminus Z$ is finite, $f^{-1}(Z)$ is of codimension at least 2.
Since $Y$ is normal, $g$ extends to a morphism $g: Y \to T$.
Now $g$ is a generically injective morphism between $Y$ and $T$.
Since $T$ is smooth, the inverse map of $T \smallsetminus p^{-1}(Z) \to Y$ also extends across $p^{-1}(Z)$ which gives the inverse morphism of $g$.
In conclusion $g$ is a biholomorphism between $T$ and $Y$ which proves that $f$ is \'etale.
Another way to prove that $f$ is \'etale is using the purity of branch locus.
In fact, $f: Y  \to \Alb(X)$ is a ramified finite cover with $Y$ normal and $\Alb(X)$ smooth.
Then by the purity of the branch locus, the branch locus is either of codimension one or empty.
Since $f$ is a submersion outside an analytic set of $\Alb(X)$ of codimension at least 2, the branch locus has to be empty.

In particular, $Y$ is a finite \'{e}tale cover of the torus $\Alb(X)$,
so $Y$ itself is a torus.
By the universality of the Albanese morphism, there exists a morphism $h: \Alb(X) \to Y$ such that the morphism $X \to Y$ factorises through $h$.
Since the morphisms $X \to Y$ and $\alpha_X$ are surjective, we have $h \circ f=id_Y$ and $f \circ h=id_{\Alb(X)}$.
Thus $f$ is a biholomorphism, and the Albanese morphism has connected fibres.
\end{proof}
The arguments of \cite{Cao13} combined with Theorem 3 also give the following affirmation of a conjecture of Mumford.
The general conjecture of Mumford states that a projective or compact K\"ahler manifold $X$ is rationally connected if and only if $H^0(X, (T_X^*)^{\otimes m})=0$ for any $m \geq 1$.
\begin{mycor}{\it
Let $(X, \omega)$ be an n-dimensional compact K\"ahler manifold.
Let $D=\sum (1-\beta_j) Y_j=\sum (1-\beta_j) [s_j=0]$ a divisor with simple normal crossings with $\beta_j \in ]0,1[$ such that $-(K_X+D)$ is nef. 
Then the following properties are equivalent:

(1) $X$ is projective and rationally connected.

(2)$H^0(X, (T_X^*)^{\otimes m})=0$ for any $m \geq 1$.

(3) For every $m \geq 1$ and every finite \'etale cover $\tilde{X}$ of $X$, one has
$H^0(\tilde{X}, \Omega_{\tilde{X}}^{ m})=0$.}
\end{mycor}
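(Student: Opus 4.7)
The plan is to follow the strategy of \cite{Cao13}, substituting Theorem 3 of the present paper for its projective/smooth counterpart as the main positivity input.

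The implication $(1)\Rightarrow(2)$ is the classical vanishing of pluri-tensors on a rationally connected projective manifold (Koll\'ar-Miyaoka-Mori, Campana), and requires nothing from the present paper. For $(2)\Rightarrow(3)$, the key observation is that the hypothesis of the corollary is preserved under finite \'etale covers: if $\pi:\tilde X\to X$ is finite \'etale, then $\pi^*D$ is SNC with the same coefficients $\beta_j\in (0,1)$, and $-(K_{\tilde X}+\pi^*D)=\pi^*(-(K_X+D))$ is nef. Since $(T_{\tilde X}^*)^{\otimes m}=\pi^*(T_X^*)^{\otimes m}$, applying (2) to $\tilde X$ itself gives $H^0(\tilde X,(T_{\tilde X}^*)^{\otimes m})=0$, and the direct-summand embedding $\Omega^m_{\tilde X}\hookrightarrow (T_{\tilde X}^*)^{\otimes m}$ obtained by antisymmetrisation then yields $H^0(\tilde X,\Omega^m_{\tilde X})=0$.

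The substantive direction is $(3)\Rightarrow(1)$, for which I would run Cao's argument. From $(3)$ with $m=1$ we first obtain $q(\tilde X)=h^0(\tilde X,\Omega^1_{\tilde X})=0$ for every finite \'etale cover $\tilde X\to X$, so the Albanese of every such cover is trivial (which in particular makes Corollary 1 vacuous and forces us into the rationally connected regime). Next consider the MRC fibration $\psi:X\dashrightarrow Z$, whose existence on a compact K\"ahler manifold is guaranteed by Campana's work; by Graber-Harris-Starr the base $Z$ is non-uniruled, hence $T_Z$ contains no torsion-free subsheaf of positive slope with respect to any K\"ahler class. On the other hand, Theorem 3 asserts that every graded piece of the Harder-Narasimhan filtration of $T_X$ has strictly positive slope. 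Comparing these two filtrations via $\psi$, as in Cao's argument, and using that the generic fibre of $\psi$ is rationally connected so that the restriction of the HN filtration to it is controlled, forces $\dim Z=0$. Hence $X$ is rationally connected, and projectivity then follows from the K\"ahler criterion (a rationally connected compact K\"ahler manifold is projective).

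The hard part is this last step of the reduction: one must transport the positivity of slopes from $T_X$ to $T_Z$ through the possibly meromorphic fibration $\psi$, while the boundary $D$ may meet the generic fibre of $\psi$ and the auxiliary K\"ahler-Einstein type metric $\omega_{\varphi_\epsilon}$ of Theorem 3 carries conic singularities along $D$. I expect that the cut-off construction and the mixed Poincar\'e/conic volume-form technology built into the proof of Theorem 3 extend to handle these restrictions, so that Cao's argument goes through with only cosmetic modifications; this is where the bulk of the work would lie.
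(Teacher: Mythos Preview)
Your high-level plan coincides with the paper's: both simply invoke Cao's arguments with Theorem~3 replacing his main theorem as the slope-positivity input, and the paper in fact gives no further details for this corollary. There are, however, two concrete problems in your sketch.

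First, your $(2)\Rightarrow(3)$ is circular. You assume (2) for $X$ and then write ``applying (2) to $\tilde X$ itself'', but nothing you have said transfers the vanishing $H^0(X,(T_X^*)^{\otimes m})=0$ to $\tilde X$; the preservation of the nef hypothesis under \'etale pullback is irrelevant here, since (2) is an assumption, not a consequence of that hypothesis. The standard argument is the Galois norm trick: pass to the Galois closure $\hat X\to X$ with group $G$, pull back a hypothetical nonzero $\sigma\in H^0(\tilde X,\Omega^m_{\tilde X})$ to $\hat\sigma$, form the $G$-invariant tensor $\bigotimes_{g\in G} g^*\hat\sigma$, and observe that it descends to a nonzero element of $H^0(X,(T_X^*)^{\otimes m|G|})$, contradicting (2).

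Second, and more importantly, your final paragraph misidentifies where the difficulty lies. You anticipate having to ``transport'' the conic metric $\omega_{\varphi_\epsilon}$ and the mixed Poincar\'e/conic cut-off technology through the MRC fibration $\psi$, and you say ``this is where the bulk of the work would lie''. But there is no such work: the entire analytic content of the paper is already packaged into Theorem~3, and Cao's deduction of the Mumford-type equivalence from the inequality $\mu_\omega(\cE_{i+1}/\cE_i)\ge 0$ is purely algebraic/cohomological, making no further use of the auxiliary metrics or of $D$. (Note also that Theorem~3 yields only non-negative slopes, not the strictly positive ones you assert; this matters for how the contradiction with the MRC base is actually organised in \cite{Cao13}.) Once Theorem~3 is available on $X$, one reads off Cao's argument verbatim, exactly as the paper indicates; there is nothing more to do with conic singularities.
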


\textbf{Acknowledgement} I thank Jean-Pierre Demailly, my PhD supervisor, for his guidance, patience and generosity. 
I would like to thank my post-doc mentor Mihai P\u{a}un, for many supports and very useful suggestions on this objective.
I would like to thank Junyan Cao, Henri Guenancia for some very interesting discussions on this objective.
This work is supported by 
DFG Projekt Singuläre hermitianische Metriken für Vektorbündel und Erweiterung kanonischer Abschnitte managed by Mihai P\u{a}un.
 
\end{document}